\newtheorem{thm}{Theorem}[]           
\newtheorem{lem}[]{Lemma}             
\newtheorem{prop}[]{Proposition}      
\newtheorem{assum}[]{Assumption}
\title{\LARGE \bf
 \mbox{Extremum Seeking Boundary Control for Euler-Bernoulli Beam PDEs}}
\author{Paulo H. F. Biazetto, Gustavo A. de Andrade, Tiago Roux Oliveira and Miroslav Krstic
\thanks{{This work was partially supported by CAPES under grants 88887.629803/2021-00  and 88881.878833/2023-01 (SticAmSud). The authors thank the Brazilian funding agencies CNPq and FAPERJ for the financial support.}}
\thanks{Paulo H. F. Biazetto is with the
Post-graduate Program in Automation and Systems Engineering, Federal University of Santa Catarina, 88040-370, Florian\'{o}polis, Brazil (e-mail:  paulo.biazetto@posgrad.ufsc.br)}%
\thanks{Gustavo A. de Andrade
is with the Department of Automation and Systems Engineering, Federal University of Santa Catarina, 88040-370, Florian\'{o}polis, Brazil. (e-mail: gustavo.artur@ufsc.br).}%
\thanks{Tiago Roux Oliveira is with the Department of Electronics and Telecommunication Engineering, State University of Rio de Janeiro, Rio de Janeiro 20550-900, Brazil (e-mail: tiagoroux@uerj.br).}
\thanks{M. Krstic is with the Department of Mechanical and Aerospace Engineering, University of California, San Diego, CA 92093-0411, USA (e-mail: krstic@ucsd.edu)}
}
\begin{document}

\maketitle

\begin{abstract}
This paper presents the design and analysis of an extremum seeking (ES) controller for scalar static maps in the context of infinite-dimensional dynamics governed by the 1D Euler-Bernoulli (EB) beam Partial Differential Equation (PDE). The beam is actuated at one end (using position and moment actuators). The map's input is the displacement at the beam's uncontrolled end, which is subject to a sliding boundary condition. Notably, ES for this class of PDEs remains unexplored in the existing literature. To compensate for PDE actuation dynamics, we employ a boundary control law via a backstepping transformation and averaging-based estimates for the gradient and Hessian of the static map to be optimized. This compensation controller leverages a Schrödinger equation representation of the EB beam and adapts existing backstepping designs to stabilize the beam. Using the semigroup and averaging theory in infinite dimensions, we prove local exponential convergence to a small neighborhood of the unknown optimal point. Finally, simulations illustrate the effectiveness of the design in optimizing the unknown static map.
\end{abstract}

\section{Introduction}

The Euler–Bernoulli (EB) beam equation can be applied to delineate a lot of flexible mechanical systems such as robotic manipulators \cite{Liu2017}; moving strips \cite{Choi2004}; flexible marine risers \cite{Do2009}; and flexible wings \cite{He2017}. For the past few years, the dynamics and the control method design for flexible systems built on the partial differential equation (PDE) theory have been extensively studied. For instance, a boundary control scheme is designed for a two-dimensional variable-length crane system under external disturbances and constraints to reduce the coupled vibrations in \cite{He2017}. An active control scheme is proposed in \cite{Zhang2016} to suppress a flexible string, in which a novel ‘disturbance-like’ term is designed to deal with the input backlash. It can be proven that the proposed control can prevent the constraint violation. In \cite{Jin2015}, a boundary controller is proposed for an EB beam with external disturbance when PDEs represent the dynamics. Many flexible systems are governed by coupled ordinary differential equations (ODEs) and PDEs. This is illustrated in \cite{He2016}, where an integral barrier Lyapunov function is employed to design cooperative control laws for a gantry crane system whose tension is additionally constrained and described by a hybrid PDE-ODE system. Although great strides in the control of flexible mechanical systems have been made, studies about extremum seeking (ES) for this class of PDEs remain unexplored in the existing literature.

Extremum Seeking is a non-model-based approach in the field of adaptive control that searches in real-time the extremum point of a performance index of a system. This method has received great attention in the control community by facing control problems when the plant has imperfections in its model or uncertainties \cite{Krsti2000StabilityOE}.

In the context of ES control schemes applied to PDEs, the first result was published in \cite{Oliveira2017ExtremumSF}, where the design and analysis of multivariable static maps subject to arbitrarily long time delays were addressed. The delays pointed out by the authors can be modeled as first-order hyperbolic transport PDEs \cite{krstic:2009}. This idea has enabled the development of extensions to other classes of PDEs \cite{book2022}.

In this paper, we explore the ES design for the EB beam PDE with actuation at one end through position and moment actuators. The system's output is the displacement at the uncontrolled end, which is subject to a sliding boundary condition. Our method is based on the well-known representation of the Euler-Bernoulli beam model through the Schrödinger equation \cite{Ren2013}. The theoretical results demonstrate that the local exponential stability of the closed-loop average system is ensured and that convergence to a small neighborhood of the extremum is achieved. Finally, we present simulations to illustrate the effectiveness of the method.

The paper is organized as follows. Section \ref{section:problem} introduces the EB beam model and the corresponding control objectives with ES. In Section \ref{section:extremum_seeking_design}, we present the proposed ES control design. We begin by designing the demodulation and additive probing signals. Next, we derive the error dynamics and design a compensator using a backstepping methodology. The closed-loop stability and asymptotic convergence to the extremum are analyzed in Section \ref{section:stability}. Section \ref{section:simulation} illustrates the control design through simulations. Finally, Section \ref{section:conclusion} brings the concluding remarks and discusses possible extensions of the results. 

\section{Problem Formulation}\label{section:problem}
\subsection{Euler-Bernoulli Beam Mathematical Model}

We consider a flexible beam with a sliding boundary at one end and free at the other end. Without loss of generality, we assume that the beam length, mass density, and flexural rigidity are unitary. The equations are given as
\begin{align}
    & u_{tt}(t,x) + u_{xxxx}(t,x) = 0,\label{eq:euler_bernoulli_pde}\\
    & u_x(t,0) = u_{xxx}(t,0) = 0,\label{eq:euler_bernoulli_bc0}\\
    & u(t,1) = \theta_{1}(t), \quad u_{xx}(t,1) = \theta_{2}(t), \label{eq:euler_bernoulli_bc1}
\end{align}
where $x\in[0,1]$ is the space, $t\in[0,+\infty)$ is the time, $u$ is the displacement of the beam, and $\theta_{1}$ and $\theta_{2}$ are control inputs (position and moment actuation, respectively).

\subsection{Control Problem}
The goal of the ES method is to optimize an unknown static map $y = Q(\Theta)$ through real-time optimization, where $y^{*}$ and $\Theta^{*}$ denote the optimal unknown output and optimizer, respectively, while $y$ represents the measurable output, and $\theta_{1}$ and $\theta_{2}$ are the inputs.  

In this work, the input of the map corresponds to the displacement at the uncontrolled end of the beam, which is subject to a sliding boundary condition (see Equation \eqref{eq:euler_bernoulli_bc0}). Thus, we define  
\begin{align}
    \Theta(t) = u(t,0).
\end{align}

\begin{assum}\label{assump:quadraticmap}
The unknown nonlinear map is assumed to be locally quadratic, i.e., 
\begin{equation}
    Q(\Theta(t)) = y^{*} + \frac{H}{2}(\Theta(t) - \Theta^{*})^{2},
    \label{extrachapter.eq:static_map}
\end{equation}
where $y^{*}, \Theta^{*} \in \mathbb{R}$, and $H < 0$ represents the Hessian.  
\end{assum}

Assumption \ref{assump:quadraticmap} is reasonable since every nonlinear function in $C^{2}(\mathbb{R})$ can be approximated as a quadratic function in the neighborhood of its extremum. Therefore, all stability results derived in this section hold at least locally.

Thus, the output of the static map is given by  
\begin{equation}
    y(t) = y^{*} + \frac{H}{2}(\Theta(t) - \Theta^{*})^{2}.
    \label{extrachapter.eq:final_output_static_map}
\end{equation}

\section{Extremum Seeking Boundary Control Design} \label{section:extremum_seeking_design}
\subsection{Demodulation Signals}
The demodulation signal $N(t)$ which is used to estimate the Hessian of the static map by multiplying it with the output $y(t)$ of the static map is defined in \cite{Ghaffari2011MultivariableNE} as
\begin{align}
\hat{H}(t) = N(t)y(t),\quad &\text{with} \quad N(t) = -\dfrac{8}{a^{2}}\cos{(2\omega t)},
    \label{eq:hessian}
\intertext{whereas the signal $M(t)$ is used to estimate the gradient of the static map as follows:}
    G(t) = M(t)y(t),\quad &\text{with} \quad M(t) = \frac{2}{a}\sin{(\omega t)}. \label{eq:gradient_estimate}
\end{align}

\subsection{Additive Probing Signal}
The perturbation $S(t)$ is adapted from the basic ES to the case of PDE actuation dynamic \cite{Oliveira2017ExtremumSF,book2022}. The trajectory generation problem, as in \cite{Krsti2008BoundaryCO}, is described as follows:
\begin{align}
    R_{tt}(t,x) + R_{xxxx}(t,x)&=0, \label{eq:trajectoryPDE}\\
    R_{x}(t,0)=R_{xxx}(t,0)&=0,\label{eq:trajectorybc0}\\
    R(t,0)&=a\sin(\omega t),\label{eq:trajectorybc03}\\
    S_{1}&\coloneqq R(t,1),\label{eq:perturbation1}\\
    S_{2}&\coloneqq R_{xx}(t,1).\label{eq:perturbation2}
\end{align}

The explicit solution of \eqref{eq:trajectoryPDE}-\eqref{eq:perturbation2} is shown in the next Lemma.
\begin{lem}
    The solution of problem \eqref{eq:trajectoryPDE}-\eqref{eq:trajectorybc03} is given by
    \begin{align*}
        R(t,x) = \frac{1}{2}\left[ \cosh (\sqrt{\omega}x)+\cos (\sqrt{\omega}x)\right]a\sin(\omega t).
    \end{align*}

    Additionally,
    \begin{align}
        S_{1}(t) &= \frac{1}{2}\left[\cosh(\sqrt{\omega}) + \cos (\sqrt{\omega})\right]a\sin(\omega t),\label{eq:perturbationS1}\\
        S_{2}(t) &= \frac{\omega}{2}\left[ \cosh(\sqrt{\omega}) - \cos (\sqrt{\omega})\right]a\sin(\omega t).\label{eq:perturbationS2}
    \end{align}
\end{lem}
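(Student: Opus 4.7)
The plan is to proceed by direct verification: since the candidate solution is handed to us, the cleanest route is to substitute it into the PDE and the three boundary conditions, then read off $S_{1}$ and $S_{2}$ by evaluating at $x=1$. I would define $f(x) := \tfrac{1}{2}\bigl[\cosh(\sqrt{\omega}x)+\cos(\sqrt{\omega}x)\bigr]$ so that the ansatz reads $R(t,x) = a\,f(x)\sin(\omega t)$, which immediately gives $R_{tt}(t,x) = -\omega^{2} R(t,x)$. For the spatial part, I would use the two identities $\tfrac{d^{4}}{dx^{4}}\cosh(\sqrt{\omega}x) = \omega^{2}\cosh(\sqrt{\omega}x)$ and $\tfrac{d^{4}}{dx^{4}}\cos(\sqrt{\omega}x) = \omega^{2}\cos(\sqrt{\omega}x)$, which together yield $R_{xxxx}(t,x) = \omega^{2} R(t,x)$. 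Summing shows that \eqref{eq:trajectoryPDE} holds. The boundary conditions at $x=0$ then drop out from $\cosh(0)+\cos(0)=2$, $\sinh(0)-\sin(0)=0$, and $\sinh(0)+\sin(0)=0$, so that $R(t,0)=a\sin(\omega t)$, $R_{x}(t,0)=0$, and $R_{xxx}(t,0)=0$.

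For a more constructive derivation (useful as a sanity check on where the symmetric combination of $\cosh$ and $\cos$ comes from), I would try the separated ansatz $R(t,x)=f(x)\sin(\omega t)$ and reduce \eqref{eq:trajectoryPDE} to the ODE $f''''(x)-\omega^{2} f(x)=0$, whose fundamental system is $\{\cosh(\sqrt{\omega}x),\sinh(\sqrt{\omega}x),\cos(\sqrt{\omega}x),\sin(\sqrt{\omega}x)\}$. The conditions $f'(0)=f'''(0)=0$ kill the $\sinh$ and $\sin$ components, and $f(0)=a$ gives one linear relation between the $\cosh$ and $\cos$ coefficients; the symmetric choice of equal coefficients is the one selected by the motion-planning construction of \cite{Krsti2008BoundaryCO} (equivalently, the unique power-series solution $R(t,x)=\sum_{k\ge 0}\tfrac{(-1)^{k}x^{4k}}{(4k)!}\,\partial_{t}^{2k}(a\sin\omega t)$, which sums to the stated $\cosh+\cos$ combination).

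Finally, to obtain \eqref{eq:perturbationS1}--\eqref{eq:perturbationS2}, I would compute $R_{xx}(t,x) = \tfrac{a\omega}{2}\bigl[\cosh(\sqrt{\omega}x)-\cos(\sqrt{\omega}x)\bigr]\sin(\omega t)$ and then set $x=1$ in both $R$ and $R_{xx}$, which gives $S_{1}(t)$ and $S_{2}(t)$ exactly as claimed.

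The only subtle point — and the closest thing to an obstacle — is that the three boundary conditions at $x=0$ do not uniquely determine a solution of a fourth-order spatial operator, so the derivation approach requires invoking the motion-planning selection principle to fix the remaining degree of freedom. Since the lemma merely asserts the closed form, the verification route is entirely routine and avoids this subtlety.
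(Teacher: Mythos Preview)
Your proposal is correct. Your primary route is direct verification of the candidate formula against the PDE and the boundary conditions at $x=0$, followed by evaluation at $x=1$ to read off $S_{1}$ and $S_{2}$. The paper instead takes the constructive motion-planning route that you mention only parenthetically: it postulates the power series $R(t,x)=\sum_{k\ge 0}a_{k}(t)\,x^{k}/k!$, substitutes into \eqref{eq:trajectoryPDE}--\eqref{eq:trajectorybc03} to obtain the recursion $a_{i+4}=-\ddot{a}_{i}$ with $a_{0}=a\sin(\omega t)$ and $a_{1}=a_{2}=a_{3}=0$, and then sums the resulting series $\sum_{k\ge 0}\omega^{2k}x^{4k}/(4k)!$ to the $\tfrac{1}{2}[\cosh(\sqrt{\omega}x)+\cos(\sqrt{\omega}x)]$ form. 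The advantage of the paper's approach is that it \emph{derives} the formula and, as you yourself observe, intrinsically fixes the remaining degree of freedom left by the three conditions at $x=0$; your verification approach is shorter and entirely adequate for the lemma as stated, but relies on the candidate being handed to you and has to defer the selection issue to the motion-planning principle rather than resolving it internally.
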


\begin{proof}
    We postulate the full-state reference trajectory in the form
    \begin{align}
        R(t,x)=\sum_{k=0}^{\infty}a_{k}(t)\frac{x^{k}}{k!}.\label{eq:series_trajectory_postulation}
    \end{align}
    
    Substituting \eqref{eq:series_trajectory_postulation} into \eqref{eq:trajectoryPDE}-\eqref{eq:trajectorybc03}, it follows that
    \begin{align*}
        &a_{0} = a\sin(\omega t) = a\mbox{Im}\{\mathrm{e}^{i\omega t}\},\\
        &a_{1} = a_{2}=a_{3}=0, \\
        &a_{i+4} = -\ddot{a}_{i}.
    \end{align*}
    
    Therefore,
    \begin{align*}
        &a_{4k} = (-1)^{k}a_{0}^{2k} = \omega^{2k}a\sin(\omega t),\\
        &a_{4k+1} = a_{4k+2} = a_{4k+3}=0.
    \end{align*}
    
    The reference trajectory then becomes
    \begin{align}
        R(t,x) &= \sum_{k=0}^{\infty}\omega^{2k}\frac{x^{4k}}{(4k)!}a\sin(\omega t), \nonumber\\
        &= \frac{1}{2}\left[ \cosh (\sqrt{\omega}x)+\cos (\sqrt{\omega}x)\right]a\sin(\omega t).\label{eq:solutionR}
    \end{align}
    
    Finally, substituting \eqref{eq:solutionR} into \eqref{eq:perturbation1}-\eqref{eq:perturbation2}, respectively, we obtain \eqref{eq:perturbationS1}-\eqref{eq:perturbationS2}. The proof is complete.
\end{proof}

\subsection{Estimation Errors and PDE-Error Dynamics}
Since our objective is to determine $\Theta^{*}$, which corresponds to the optimal unknown actuators $\theta_{1}(t)$ and $\theta_{2}(t)$, we introduce the following estimation errors:  
\begin{align}
    \hat{\theta}_{1}(t) &\coloneqq \theta_{1}(t) - S_{1}(t),& \hat{\theta}_{2}(t) &\coloneqq \theta_{2}(t) - S_{2}(t), \label{eq:error_sigma} \\
    \hat{\Theta}(t) &\coloneqq \Theta(t) - a\sin(t). \label{eq:error_theta1} 
\intertext{\indent Furthermore, we define the estimation errors in both the input and propagated input variables as}
    \tilde{\theta}_{1}(t) &\coloneqq \hat{\theta}_{1}(t) - \Theta^{*}, & \tilde{\theta}_{2}(t) &\coloneqq \hat{\theta}_{2}(t) - 0,
    \label{eq:error_thetatilde}\\
    \vartheta(t) &\coloneqq \hat{\Theta}(t) - \Theta^{*}.\label{eq:error_vartheta}
\end{align}

Next, we define  
\begin{equation}
    \alpha(t,x) = u(t,x) - R(t,x) - \Theta^{*}. \label{eq:errorstate}
\end{equation}

Differentiating \eqref{eq:errorstate} with respect to time and substituting \eqref{eq:euler_bernoulli_pde} and \eqref{eq:trajectoryPDE}, we obtain the following error dynamics:  
\begin{align}
    \alpha_{tt}+\alpha_{xxxx} = 0.\label{eq:errorPDE}
\end{align}

By differentiating \eqref{eq:errorstate} with respect to space once and three times, respectively, evaluating at $x=0$, and applying boundary conditions \eqref{eq:euler_bernoulli_bc0} and \eqref{eq:trajectorybc0}, we have  
\begin{align}
    \alpha_{x}(t,0) = \alpha_{xxx}(t,0) = 0.
\end{align}

Additionally, from \eqref{eq:error_vartheta} and the definition in \eqref{eq:errorstate}, we obtain  
\begin{align}
    \vartheta = \alpha(t,0). \label{eq:erroTheta}
\end{align}

Similarly, evaluating \eqref{eq:errorPDE} at $x=1$, differentiating \eqref{eq:errorPDE} twice with respect to $x$, and substituting the boundary conditions \eqref{eq:euler_bernoulli_bc1} and the estimations \eqref{eq:error_sigma} and \eqref{eq:error_thetatilde}, we obtain  
\begin{align}
    \alpha(t,1) = \tilde{\theta}_{1}(t), \qquad \alpha_{xx}(t,1) = \tilde{\theta}_{2}.\label{eq:errorbc1}
\end{align}

Taking the time derivative of \eqref{eq:errorPDE}-\eqref{eq:errorbc1} and defining $U_{1}(t)\coloneqq\dot{\tilde{\theta}}_{1}$ and $U_{2}(t)\coloneqq \dot{\tilde{\theta}}_{2}$, the so-called propagated error dynamics can be expressed as  
\begin{align}
    &\dot{\vartheta}(t) = \beta(t,0),\label{eq:ODE1errordynamics}\\
    &\beta_{tt}(t,x)+\beta_{xxxx}(t,x) = 0,\label{eq:PDEerrordynamics}\\
    &\beta_{x}(t,0)=\beta_{xxx}(t,0) = 0,\label{eq:BC0errordynamics}\\
    &\beta(t,1) = U_{1}(t),\quad \beta_{xx}(t,1) = U_{2}(t),\label{eq:BCUerrordynamics}
\end{align}
where $\beta(t,x) \coloneqq \alpha_{t}(t,x)$.

Adapting the proposed scheme in \cite{book2022} and combining \eqref{eq:euler_bernoulli_pde}-\eqref{extrachapter.eq:final_output_static_map}, the closed-loop ES with actuation dynamics governed by the EB beam PDE is illustrated in Figure \ref{fig:euler}.

\begin{figure*}
    \centering
    \includegraphics[width=0.78\linewidth]{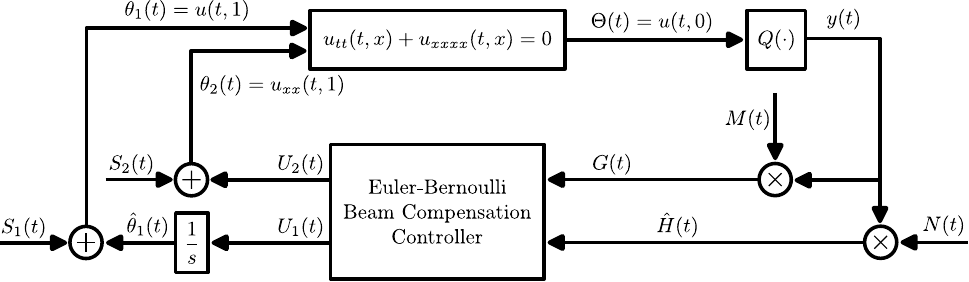}
    \caption{Block diagram of the ES control loop applied to the EB beam problem.}
    \label{fig:euler}
\end{figure*}

\subsection{Euler-Bernoulli Beam Compensation via Backstepping Boundary Control}
To compensate for PDE actuation dynamics, we employ a boundary control law via a backstepping transformation and averaging-based estimates for the gradient and Hessian of the static map to be optimized. This compensation controller leverages a Schrödinger equation representation of the EB beam and extends existing backstepping designs to stabilize the beam. 

As a first step in our design, we transform \eqref{eq:ODE1errordynamics}-\eqref{eq:BCUerrordynamics} into a coupled ODE-Schrödinger system. To achieve this, we introduce the following transformation:
\begin{align}
    v(t,x) = \beta_t(t,x) - i \beta_{xx}(t,x). \label{eq:transformation_schrodinger}
\end{align}

Differentiating \eqref{eq:transformation_schrodinger} with respect to time and twice with respect to space, and substituting \eqref{eq:PDEerrordynamics}, it follows that transformation \eqref{eq:transformation_schrodinger} satisfies the following Schrodinger equation:
\begin{align}
    v_t(t,x) = -i v_{xx}(t,x). \label{eq:schrodingerPDE}
\end{align}

Evaluating \eqref{eq:transformation_schrodinger} at $x = 1$, we obtain the following boundary condition for \eqref{eq:schrodingerPDE}:  
\begin{align}
    v(t,1) = U(t), \label{eq:control_schrodinger}
\end{align}
where $U(t) = \dot{U}_{1}(t) - iU_{2}(t)$.  

The second boundary condition for \eqref{eq:schrodingerPDE} is given by  
\begin{align}
    v_{x}(t,0) = 0,
\end{align}
which follows by differentiating \eqref{eq:schrodingerPDE} with respect to $x$, evaluating the resulting expression at $x = 0$, and substituting \eqref{eq:BC0errordynamics}.  

Finally, we define
\begin{align}
    \dot{\vartheta}_{s}(t) = v(t,0).\label{eq:varthetadynamics}
\end{align}

Note that from \eqref{eq:ODE1errordynamics}, \eqref{eq:transformation_schrodinger} and \eqref{eq:varthetadynamics}  we have $\ddot{\vartheta}(t) = \mbox{Re}\{\dot{\vartheta}_{s}(t)\}=\mbox{Re}\{v(t,0)\}$. Therefore, $\dot{\vartheta}(t) = \mbox{Re}\{\vartheta_{s}(t)\}$

With this formulation, we can now design a backstepping stabilization strategy for the system \eqref{eq:schrodingerPDE}-\eqref{eq:varthetadynamics} and apply it to \eqref{eq:ODE1errordynamics}-\eqref{eq:BCUerrordynamics}.  

\subsubsection{Target System}
We want to map the system \eqref{eq:ODE1errordynamics}-\eqref{eq:BCUerrordynamics} into the following exponentially stable ODE-PDE:
\begin{align}
    \dot{\vartheta}_{s}(t) &= -\overline{K}\vartheta_{s}(t) + w(t,0),\label{eq:target_ode} \\
    w_t(t,x) &= -i w_{xx}(t,x) - c w(t,x), \label{eq:target_pde}\\
    w_x(t,0) &= w(t,1) = 0.\label{eq:target_bc}
\end{align}
\noindent where $\overline{K},c$ are arbitrary pre-defined decay rate.

In order to establish the exponential stability of \eqref{eq:target_ode}-\eqref{eq:target_bc}, let us define the state space $\mathcal{H}=\mathbb{C}\times L^{2}(0,1)$, with the inner product induced norm $$\|(X,f)\|_{\mathcal{H}}=\left(|X|^{2}+\int_{0}^{1}f^{2}(x)dx\right)^{1/2},$$ and define the operator of the system \eqref{eq:target_ode}-\eqref{eq:target_bc} by
\begin{align}
    \mathcal{A}_{w}(X,\; f) = \left(-\overline{K}X + f(0),\;-if''-cf\right),\label{eq:operator_schrodinger}
    \end{align}
 \noindent $\forall (X,\; f)\in D(\mathcal{A}_{w})$, and
    \begin{align}
    \hspace{-0.2cm}D(\mathcal{A}_{w}) =& \left\{ (X,\; f)\in \mathbb{C}\times H^{2}(0,1)| \;f(1) = f'(0)=0 \right\}.\label{eq:domain_operator_schrodinger}
\end{align}

Then, \eqref{eq:target_ode}-\eqref{eq:target_bc} can be written as the following evolution equation in $\mathcal{H}$:
\begin{align}
    \frac{d Y_{w}(t)}{dt} &= \mathcal{A}_{w}Y_{w}(t),\\
    Y_{w}(0) &= Y_{w0},
\end{align}
\noindent where $Y_{w} = (X,\,f)$.

With these definitions in hand, we have the following result \cite{Ren2013}.
\begin{lem}
    Let $\mathcal{A}_{w}$ be defined by \eqref{eq:operator_schrodinger}-\eqref{eq:domain_operator_schrodinger}. Then
    \begin{itemize}
        \item $\mathcal{A}_{w}^{-1}$ exists and is compact on $\mathcal{H}$ and hence the spectrum of $\mathcal{A}_{w}$ consists of isolated eingenvalues of finitely algebraic multiplicity only, which are given by
        \begin{align*}
            \sigma_{0} &= -\overline{K}, &
            \sigma_{m} &= -c +  i\left(m+\frac{1}{2} \right)^{2}\pi^{2},\quad m\in\mathbb{N}.
        \end{align*}
        \item There is a sequence of eigenfunctions of $\mathcal{A}_{w}$ which forms a Riesz basis for $\mathcal{H}$.
        \item $\mathcal{A}_{w}$ generates an exponentially stable C$_{0}$-semigroup $\mathrm{e}^{\mathcal{A}_{w} t}$ in the sense $$\|\mathrm{e}^{\mathcal{A}_{w} t}\|_{\mathcal{H}}\leq M_{1}\mathrm{e}^{-ct},$$ where $M_{1}>0$.
    \end{itemize}
\end{lem}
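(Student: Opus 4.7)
The plan is to address the three claims in order: (i) invertibility and compactness of $\mathcal{A}_w^{-1}$ by explicit construction, (ii) computation of the point spectrum by a direct ODE calculation, and (iii) the Riesz basis property and the resulting semigroup bound via a perturbation argument combined with the spectrum-determined growth condition.

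For (i), given $(Y,g)\in\mathcal{H}$, I would solve the resolvent problem $\mathcal{A}_w(X,f)=(Y,g)$. The PDE component, $-if''(x)-cf(x)=g(x)$ with $f'(0)=f(1)=0$, is a well-posed linear second-order boundary value problem: its homogeneous version has only the trivial solution (as can be checked from the Sturm--Liouville-type structure), so variation of parameters yields a unique $f\in H^2(0,1)$ depending continuously on $g$. The scalar equation $-\overline{K}X+f(0)=Y$ then determines $X$ uniquely, provided $\overline{K}\neq 0$. The resulting inverse maps $\mathcal{H}$ boundedly into $\mathbb{C}\times H^2(0,1)$; composing with the compact Sobolev embedding $H^2(0,1)\hookrightarrow L^2(0,1)$ gives compactness of $\mathcal{A}_w^{-1}$ on $\mathcal{H}$, and standard spectral theory for compact operators then delivers the isolated-eigenvalue structure with finite algebraic multiplicities.

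For (ii), writing $\mathcal{A}_w(X,f)=\sigma(X,f)$ and first considering $f\not\equiv 0$, the PDE part becomes $f''(x)=i(\sigma+c)f(x)$ with $f'(0)=f(1)=0$. Setting $\mu^2=-i(\sigma+c)$ and $f(x)=A\cos(\mu x)+B\sin(\mu x)$, the condition at $x=0$ forces $B=0$, and $f(1)=0$ gives $\cos\mu=0$, so $\mu_m=(m+\tfrac12)\pi$ for $m\in\mathbb{N}$; isolating $\sigma$ then yields $\sigma_m=-c+i(m+\tfrac12)^2\pi^2$. The remaining case $f\equiv 0$ requires $X\neq 0$, and the first component reduces to $-\overline{K}X=\sigma X$, producing the additional eigenvalue $\sigma_0=-\overline{K}$.

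The main obstacle is (iii), since $\mathcal{A}_w$ is non-self-adjoint because of the factor $-i\partial_{xx}$ and the one-way coupling through $f(0)$. My approach is to first observe that for the uncoupled PDE block the eigenfunctions $\{\cos(\mu_m x)\}_{m\ge 0}$ form an orthogonal (hence Riesz) basis of $L^2(0,1)$, and then to treat the coupling into the $\mathbb{C}$-component as a finite-rank perturbation of the generator. A Bari/Guo-type theorem on the stability of the Riesz basis property under compact perturbations of operators with well-separated, eventually simple spectra then supplies the desired Riesz basis of (generalized) eigenfunctions for $\mathcal{A}_w$ in $\mathcal{H}$. With a Riesz basis in hand, the spectrum-determined growth condition applies; since $\mathrm{Re}(\sigma_m)=-c$ for all $m\ge 1$ and $\sigma_0=-\overline{K}$, under the natural assumption $\overline{K}\ge c$ the generated $C_0$-semigroup satisfies $\|\mathrm{e}^{\mathcal{A}_w t}\|_{\mathcal{H}}\le M_1\mathrm{e}^{-ct}$ for some $M_1>0$, yielding the claimed bound.
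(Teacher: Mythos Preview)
The paper does not actually prove this lemma; it merely states the result and attributes it to \cite{Ren2013}. Your proposal therefore supplies considerably more than the paper does, and the overall strategy---explicit inversion plus Sobolev compactness for part (i), the direct ODE computation for the eigenvalues in (ii), and a Riesz-basis argument followed by the spectrum-determined growth condition for (iii)---is exactly the standard route one finds in the cited reference and in related cascade ODE--Schr\"odinger papers.

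One point deserves tightening. In (iii) you describe the coupling $f\mapsto f(0)$ as inducing a ``finite-rank'' or ``compact'' perturbation of the generator on $\mathcal{H}=\mathbb{C}\times L^{2}(0,1)$, but point evaluation is unbounded on $L^{2}$, so this is not a bounded perturbation of the generator in the usual sense and the abstract stability theorems you invoke do not apply directly. The cleaner route is to compute the eigenvectors explicitly: they are $\phi_{-1}=(1,0)$ and, for $m\ge 0$, $\phi_{m}=\bigl((\sigma_{m}+\overline{K})^{-1},\,\cos(\mu_{m} x)\bigr)$. Since $|\sigma_{m}+\overline{K}|^{-1}=\mathcal{O}(m^{-2})$, the family $\{\phi_{m}\}$ is quadratically close to the orthogonal basis $\{(1,0)\}\cup\{(0,\cos(\mu_{m} x))\}$ of $\mathcal{H}$, and Bari's theorem (together with $\omega$-linear independence, which follows from the eigenvalues being distinct) yields the Riesz basis directly. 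Your closing remark that the stated decay rate $e^{-ct}$ tacitly requires $\overline{K}\ge c$ is correct and worth flagging, since the lemma as written does not make this hypothesis explicit.
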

As highlighted in \cite{Ren2013}, applying a single-step backstepping transformation is challenging due to the complexity of the associated kernels. To overcome this difficulty, a two-step design approach is adopted.

\textbf{First backstepping transformation:} We consider the ODE-PDE \eqref{eq:ODE1errordynamics}-\eqref{eq:BCUerrordynamics} and use the backstepping transformation 
\begin{align}
    z(t,x) = v(t,x) - \int_{0}^{x} q(x,y)v(t,y)dy- \gamma(x)\vartheta_{s}(t),\label{eq:backsteppingtransformation1}
\end{align}
\noindent to transform the original system  \eqref{eq:ODE1errordynamics}-\eqref{eq:BCUerrordynamics} into the target system
\begin{align}
    \dot{\vartheta}_{s}(t) &= -\overline{K}\vartheta_{s}(t) + w(t,0),\label{eq:intermediateODE} \\
    z_{t}(t,x) &= -i z_{xx}(t,x), \label{eq:intermediatePDE}\\
    z_{x}(t,0) &= 0, \quad z(t,1) = W(t).\label{eq:intermediateBC1}
\end{align}
\noindent where $W\in\mathbb{C}$.

The kernels $q$ and $\gamma$ can be shown as
\begin{align}
    &\gamma''(x) = 0,\label{eq:odekernel_intermediate}\\
    &\gamma'(0) = 0,\quad \gamma(0) = -\overline{K}\label{eq:odekernelbc2_intermediate},\\
    &q_{yy}(x,y) - q_{xx}(x,y) = 0,\label{eq:pdekernel_intermediate}\\
    &q_{y}(0) = -i\gamma(x),\quad
    q(x,x) = 0.\label{eq:pdekernelbc2_intermediate}
\end{align}

It can be easily seen that the solution of \eqref{eq:odekernel_intermediate}-\eqref{eq:odekernelbc2_intermediate} is 
\begin{align}
    \gamma(x) = -\overline{K},\qquad \forall x \in[0,1].
\end{align}

Furthermore, the solution of \eqref{eq:pdekernel_intermediate}-\eqref{eq:pdekernelbc2_intermediate} is
\begin{align}
    q(x,y) = i\int_{0}^{x-y}\gamma(\sigma)d\sigma = -i\overline{K}(x-y). \label{eq:solutionkernelpde_intermediate}
\end{align}

Then, from \eqref{eq:backsteppingtransformation1} and \eqref{eq:intermediateBC1}, the control law is given by
\begin{align}
    U(t) = W(t) - i\overline{K}\int_{0}^{1} (1-y)v(t,y)dy - \overline{K}\vartheta_{s}(t), \label{eq:controller_schrodinger}
\end{align}

\textbf{Second backstepping transformation:} Now, consider the following backstepping transformation:
\begin{align}
    w(t,x) = z(t,x) - \int_{0}^{x}\kappa(x,y)z(t,y)dy, \label{eq:backsteppingtransformation2}
\end{align}
\noindent to transform \eqref{eq:intermediateODE}-\eqref{eq:intermediateBC1} into \eqref{eq:target_ode}-\eqref{eq:target_bc}.

Differentiating \eqref{eq:backsteppingtransformation2} once with respect to time and twice with respect to space, substituting \eqref{eq:intermediateODE}-\eqref{eq:intermediateBC1} into it, and plugging the expressions into \eqref{eq:target_ode}-\eqref{eq:target_bc}, we obtain that \eqref{eq:intermediateODE}-\eqref{eq:intermediateBC1} is mapped into \eqref{eq:target_ode}-\eqref{eq:target_bc} if, and only if, the kernel $\kappa$ satisfies the following PDE:
\begin{align}
    &\kappa_{xx}(x,y)-\kappa_{yy}(x,y) = ic\kappa (x,y),\label{eq:kernelpdefinal}\\
    &\kappa_{y}(x,0) = 0, \quad
    \kappa(x,x) = -i\frac{c}{2}x.\label{eq:kernelbc2final}
\end{align}

The solution to the PDE \eqref{eq:kernelpdefinal}-\eqref{eq:kernelbc2final} is given in page 66 of \cite{Krsti2008BoundaryCO}, by
\begin{align}
    \kappa(x,y) = \kappa_{r}(x,y) + i\kappa_{i}(x,y),
\end{align}
\noindent where 
\begin{align*}
    \kappa_{r}(x,y) &= x\sqrt{\frac{c}{2(x^2-y^2)}}\left[-\mbox{ber}_{1}\biggl(\sqrt{c(x^2-y^2)}\biggl) \right.\\
    & \left. - \mbox{bei}_{1}\biggl(\sqrt{c(x^2-y^2)}\biggl)\right],\\
    \kappa_{i}(x,y) &= x\sqrt{\frac{c}{2(x^2-y^2)}}\left[\mbox{ber}_{1}\biggl(\sqrt{c(x^2-y^2)}\biggl) \right.\\
    & - \left. \mbox{bei}_{1}\biggl(\sqrt{c(x^2-y^2)}\biggl) \right],
\end{align*}
and ber$_{1}$ and bei$_{1}$ are the Kelvin functions.

Then, evaluating \eqref{eq:backsteppingtransformation2} at $x=1$ and using \eqref{eq:target_bc}, we obtain 
\begin{align}
    W(t) = \int_{0}^{1}\kappa(1,y)z(t,y)dy. \label{eq:intermediatecontroller}
\end{align}

Plugging \eqref{eq:intermediatecontroller} into \eqref{eq:controller_schrodinger}, with the help of \eqref{eq:backsteppingtransformation1}, we obtain the backstepping feedback control
\begin{align}
    U(t) &= \int_{0}^{1}\left[\kappa_{r}+\int_{y}^{1}\kappa_{i}(1,\xi)(\xi-y)d\xi\right]v(t,y)dy\nonumber\\
    &-\overline{K}\left[1 - \int_{0}^{1}\kappa_{r}(1,y)dy \right]\vartheta_{s} (t) \nonumber\\
    &+ i\left[\int_{0}^{1}\biggl[\kappa_{i}(1,y)-\overline{K}(1-y)\right.\nonumber\\
    &\left.-\int_{y}^{1}\kappa_{r}(1,\xi)(\xi-y)d\xi\right]v(t,y)dy \nonumber\\
    &\left.+\overline{K}\int_{0}^{1}\kappa_{i}(1,y)dy\vartheta_{s}(t)\right]. \label{eq:control_law_mean}
\end{align}

\subsubsection{Invertibility of the Transformations}
As demonstrated in \cite{Ren2013}, the transformations \eqref{eq:backsteppingtransformation1} and \eqref{eq:backsteppingtransformation2} are invertible. Specifically, by postulating the inverse transformation of \eqref{eq:backsteppingtransformation1} as
\begin{align*}
    v(t,x) = z(t,x) - \int_{0}^{x} \iota(x,y) z(t,y) \, dy - \psi(x) \vartheta_{s}(t),
\end{align*}
\noindent and similarly, for \eqref{eq:backsteppingtransformation2},
\begin{align*}
    z(t,x) = w(t,x) - \int_{0}^{x} \eta(x,y) w(t,y) \, dy - \chi(x) \vartheta_{s}(t),
\end{align*}
one can determine the kernels $\iota$, $\eta$, $\psi$, and $\chi$ using the same reasoning as in the direct transformation. As a result, the closed-loop system and the target system exhibit identical stability properties.

\subsection{Target System of the Euler-Bernoulli Beam Equation}
To find out what the actual target system of the EB beam PDE \eqref{eq:ODE1errordynamics}-\eqref{eq:BCUerrordynamics} by using the transformations \eqref{eq:backsteppingtransformation1} and \eqref{eq:backsteppingtransformation2} and control law \eqref{eq:control_law_mean}, let us define 
\begin{align}
    \zeta(t,x) = \int_{x}^{1}\int_{0}^{y}\mbox{Im}\{w(t,\xi)\}d\xi dy,\label{eq:transformation_bernoulli}
\end{align}
\noindent where $w$ is the state of the target system \eqref{eq:target_pde}-\eqref{eq:target_bc} for the Schr\"{o}dinger equation. Computing the second and fourth-order partial derivatives of \eqref{eq:transformation_bernoulli} in time and space, integrating by parts, and using the boundary conditions in \eqref{eq:target_bc}, we verify that $\zeta$ satisfies the following PDE: 
\begin{align}
    &\zeta_{tt}(t,x) + 2c \zeta_{t}(t,x) + c^2 \zeta + \zeta_{xxxx}(t,x) = 0, \label{eq:target_bernoulli_pde}\\
    &\zeta_{x}(t,0) = 0,  \qquad\zeta_{xxx}(t,0) = 0,\label{eq:target_bernoulli_bc2}\\
    &\zeta(t,1) = 0,  \qquad \;\;\;\zeta_{xx}(t,1) = 0.\label{eq:target_bernoulli_bc4}
\end{align}

Additionally, by noticing from \eqref{eq:transformation_bernoulli} and \eqref{eq:target_pde}-\eqref{eq:target_bc}, that the state $w$ is expressed through $\zeta$ as 
\begin{align}
    w(t,x) = \zeta_{t}(t,x) + c\zeta(t,x) - i\zeta_{xx}(t,x),\label{eq:relation_w_zeta}
\end{align}
\noindent it follows that
\begin{align}
    \dot{\vartheta}_{s}(t) = -\overline{K}\vartheta_{s}(t) + \zeta_{t}(t,0) + c\zeta(t,0) - i\zeta_{xx}(t,0). \label{eq:ISS}
\end{align}

To establish the stability for this system, let us consider the state space  $\mathcal{H}_{\zeta} = \mathbb{C}\times\mathcal{H}^{2}_{L}(0,1)\times L^{2}(0,1)$, where $$\mathcal{H}_{L}^{2}=\{f\in H^{2}(0,1)|\; f'(0)=f(1)=0\},$$ and the following induced norm is used $$\|(X,\;f,\; g)\|^{2}_{\mathcal{H}_{\zeta}}=|X|^{2}+\int_{0}^{1}\biggl( |f''(x)|^{2}+|g(x)|^{2}\biggl)dx.$$

Furthermore, define the following operators for \eqref{eq:target_bernoulli_pde}-\eqref{eq:ISS}:
\begin{align}
    \mathcal{A}_{\zeta_{1}}(X, f, g) = \left(-\overline{K}X + g(0)+cf(0)-if''(0),\;g,\;- f^{(4)} \right),\label{eq:operator1bernoulli}
\end{align}
\noindent $\forall (X, f,g)\in D(\mathcal{A}_{\zeta_{1}})$, with 
\begin{align}
D(\mathcal{A}_{\zeta_{1}}) = \left\{ (X, \; f,\;g )\in \mathcal{H}_{\zeta}|\; X\in \mathbb{C},\; \zeta \in H^{4}(0,1), \right. \nonumber\\
\left. g\in \mathcal{H}_{L}^{2}, f'''(0)=f''(1) = 0 \right\},
\end{align}
and
\begin{align}
   \mathcal{A}_{\zeta_{2}}(X, \;f , \; g) = \left( 0\;, 0,\; -2cg-c^2f \right),\label{eq:operator2bernoulli}
\end{align}
\noindent $\forall (X,\; f,\; g)\in \mathcal{H}_{\zeta}$.

Then, system \eqref{eq:target_bernoulli_bc2}-\eqref{eq:ISS} can be written as
\begin{align}
    \frac{dY_{\zeta}(t)}{dt} &= \left(\mathcal{A}_{\zeta_{1}} + \mathcal{A}_{\zeta_{2}} \right)Y_{\zeta}(t),\label{eq:bernoulli_target_operator}\\
    Y_{\zeta}(0) &= Y_{\zeta0}. \label{eq:bernoulli_target_initial_condition}
\end{align}
\noindent where $Y_{\zeta} = (X,\;f,\; g)$.

Using these results, the exponential stability of \eqref{eq:bernoulli_target_operator}-\eqref{eq:bernoulli_target_initial_condition} is formally stated as follows \cite{Smyshlyaev2009}.

\begin{lem}\label{lem:properties_operator_target_bernoulli}
    Let $\mathcal{A}_{\zeta_{1}}$ and $\mathcal{A}_{\zeta_{2}}$ be defined by \eqref{eq:operator1bernoulli}-\eqref{eq:operator2bernoulli}, respectively. Then
    \begin{itemize}
        \item $\mathcal{A}_{\zeta_{1}}^{-1}$ exists and is compact on $\mathcal{H}_{\zeta}$ and hence the spectrum of $\mathcal{A}_{\zeta_{1}}$ consists of isolated eigenvalues of finitely algebraic multiplicity only, which are given by
        \begin{align*}
            \sigma_{0}&=-\overline{K}, &\sigma_{n} =& -c \pm i \frac{\pi^{2}(2n+1)^{2}}{4}, \qquad m\in\mathbb{N}.
        \end{align*}
        \item There is a sequence of eigenfunctions of $\mathcal{A}_{\zeta_{1}}$ which forms a Riesz basis for $\mathcal{H}_{\zeta_{1}}$.
        \item $\left(\mathcal{A}_{\zeta_{1}} + \mathcal{A}_{\zeta_{2}} \right)$ generates an exponentially stable C$_{0}$-semigroup $\mathrm{e}^{\left(\mathcal{A}_{\zeta_{1}} + \mathcal{A}_{\zeta_{2}} \right)t}$ in the sense $$\|\mathrm{e}^{\left(\mathcal{A}_{\zeta_{1}} + \mathcal{A}_{\zeta_{2}} \right) t}\|_{\mathcal{H}_{\zeta}}\leq M_{2}\mathrm{e}^{-ct}, \quad M_{2}>0.$$ 
    \end{itemize}
\end{lem}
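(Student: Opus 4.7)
The plan is to exploit the cascade structure of $\mathcal{A}_{\zeta_{1}}$ and the boundedness of $\mathcal{A}_{\zeta_{2}}$, establishing the three bullets in order: (i) invertibility/compactness via direct solution of the defining triangular system, (ii) explicit spectrum and Riesz basis via a classical Euler--Bernoulli beam eigenvalue computation augmented by the ODE eigenvalue, and (iii) the semigroup bound via bounded perturbation together with a spectrum-shift argument.

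For (i), I would fix $(Y,h,k)\in\mathcal{H}_{\zeta}$ and solve $\mathcal{A}_{\zeta_{1}}(X,f,g)=(Y,h,k)$ sequentially: the second component gives $g=h$ outright; the third reduces to the fourth-order BVP $f^{(4)}=-k$ with the boundary data $f'(0)=f'''(0)=f(1)=f''(1)=0$ dictated by $D(\mathcal{A}_{\zeta_{1}})$; and the first determines $X=-\overline{K}^{-1}\bigl(Y-h(0)-cf(0)+if''(0)\bigr)$. The homogeneous BVP admits only $f\equiv 0$, since the sliding-end conditions at $x=0$ kill the $\sin$ and $\sinh$ modes of the characteristic basis, leaving $f(x)=A\cos(\mu x)+C\cosh(\mu x)$, and the pinned-end conditions at $x=1$ force $A=C=0$ in the $k\equiv 0$ case. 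A Green's function construction then yields a bounded inverse into $\mathbb{C}\times(H^{4}(0,1)\cap\mathcal{H}_{L}^{2})\times\mathcal{H}_{L}^{2}$, and compactness of this inverse into $\mathcal{H}_{\zeta}$ follows from Rellich--Kondrachov, giving discreteness of $\sigma(\mathcal{A}_{\zeta_{1}})$ with finite algebraic multiplicities.

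For (ii), I would set $\mathcal{A}_{\zeta_{1}}(X,f,g)=\sigma(X,f,g)$, obtaining $g=\sigma f$ and $f^{(4)}=-\sigma^{2}f$ with the same four boundary conditions. Writing $\sigma^{2}=-\mu^{4}$, the modal reduction gives $f(x)=A\cos(\mu x)+C\cosh(\mu x)$ together with $C=0$ and $\cos(\mu)=0$, so $\mu_{n}=(2n+1)\pi/2$ and the PDE branch of eigenvalues is produced; the isolated eigenvalue $-\overline{K}$ arises separately from the trivial mode $f\equiv g\equiv 0$, $X\neq 0$. The eigenfunctions of the fourth-order subproblem are a classical self-adjoint basis for the beam energy space, and because the one-dimensional eigenspace at $-\overline{K}$ is disjoint from them, a Bari-type perturbation argument in the style of \cite{Smyshlyaev2009} upgrades the combined collection to a Riesz basis of $\mathcal{H}_{\zeta}$.

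For (iii), the operator $\mathcal{A}_{\zeta_{2}}$ is bounded on $\mathcal{H}_{\zeta}$, so the bounded perturbation theorem guarantees that $\mathcal{A}_{\zeta_{1}}+\mathcal{A}_{\zeta_{2}}$ generates a $C_{0}$-semigroup. To identify the decay rate, the substitution $\tilde{\zeta}=\mathrm{e}^{ct}\zeta$ converts the damped beam equation \eqref{eq:target_bernoulli_pde} into the undamped $\tilde{\zeta}_{tt}+\tilde{\zeta}_{xxxx}=0$ under identical boundary data, shifting the PDE branch of the spectrum by $-c$ to the claimed locations; the Riesz basis property then transfers this uniform spectral bound to the semigroup growth bound, yielding the stated estimate $M_{2}\mathrm{e}^{-ct}$ provided $\overline{K}\geq c$. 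The principal obstacle I anticipate is the Riesz-basis verification for the coupled ODE--PDE operator, since unlike the modal computation this step requires a genuine Bari-type perturbation estimate relating the augmented eigenfunctions to the classical beam eigenfunctions, rather than a direct trigonometric calculation.
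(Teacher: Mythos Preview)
The paper does not prove this lemma at all; it is stated with a citation to \cite{Smyshlyaev2009} and no argument is supplied. Your outline is therefore not competing against any proof in the paper, and the route you sketch---explicit resolvent construction for (i), direct modal computation for (ii), and bounded perturbation together with the spectral shift $\tilde\zeta=\mathrm{e}^{ct}\zeta$ for (iii)---is exactly the standard machinery that reference employs. Two small corrections to your details: in (i) the homogeneous problem is $f^{(4)}=0$, whose general solution is a cubic polynomial rather than the $\cos/\cosh$ combination you describe (that modal form belongs to the eigenvalue calculation in (ii), not to invertibility at $\sigma=0$); and your modal calculation in (ii) correctly produces PDE eigenvalues $\pm i\pi^{2}(2n+1)^{2}/4$ for $\mathcal{A}_{\zeta_1}$ \emph{without} the $-c$ shift, so the values $-c\pm i\pi^{2}(2n+1)^{2}/4$ printed in the lemma are in fact the eigenvalues of $\mathcal{A}_{\zeta_1}+\mathcal{A}_{\zeta_2}$---a slip in the lemma's wording that your argument implicitly repairs. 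Your caveat that the decay bound $M_{2}\mathrm{e}^{-ct}$ additionally requires $\overline{K}\geq c$ is also correct and is left tacit in the paper.
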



\subsection{Control Laws}
In this section, the control law \eqref{eq:control_law_filtered} will be rewritten in terms of the EB beam PDE states (see \eqref{eq:ODE1errordynamics}-\eqref{eq:BCUerrordynamics}). Using \eqref{eq:backsteppingtransformation1} and \eqref{eq:backsteppingtransformation2}, we have
\begin{align}
    w(t,x) &= v(t,x) + i\overline{K}\int_{0}^{x}(x-y)v(t,y)dy \nonumber\\
    &-\int_{0}^{x}\kappa(x,y)\biggl[v(t,y) +i\overline{K}\int_{0}^{y}(y-\xi)v(t,\xi)d\xi\biggl]dy\nonumber\\ &-\overline{K}\biggl[\int_{0}^{x}\kappa(x,y)dy - 1\biggl]\vartheta_{s}(t),
\end{align}
\noindent and using \eqref{eq:transformation_schrodinger} and \eqref{eq:relation_w_zeta}, it follows that the transformation \eqref{eq:backsteppingtransformation2} becomes
\begin{align}
    \zeta_{t}(t,x) &+ c\zeta(t,x) = \beta_{t}(t,x) +p_{1}(x)\vartheta_{s}(t) \nonumber\\
    &\hspace{-0.5cm}- \int_{0}^{x}\biggl[\kappa_{r}(x,y)+f_{1}(x,y)\biggl] \beta_{t}(t,y)dy \nonumber\\
    &\hspace{-0.5cm}-\int_{0}^{x}\biggl[\kappa_{i}(x,y) - \overline{K}(x-y) + f_{2}(x,y)\biggl]\beta_{xx}(t,y)dy, \label{eq:transformation_bernoulli1} \\
    \zeta_{xx}(t,x) &= \beta_{xx}(t,x) + p_{2}(x)\vartheta_{s}(t) \nonumber\\
    & - \int_{0}^{x}\biggl[\overline{K}(x-y)-\kappa_{i}(x,y) - g_{1}(x,y)\biggl]\beta_{t}(t,y)dy\nonumber\\
    & -\int_{0}^{x}\biggl[\kappa_{r}(x,y) - g_{2}(x,y)\biggl]\beta_{xx}(t,y)dy,\label{eq:transformation_bernoulli2}
\end{align}
\noindent where
\begin{align*}
    f_{1}(x,y) &= \overline{K}\int_{y}^{x}\kappa_{i}(x,\xi)(\xi-y)d\xi,\\
    f_{2}(x,y) &= \overline{K}\int_{y}^{x}\kappa_{r}(x,\xi)(\xi-y)d\xi,\\
    g_{1}(x,y) &= \overline{K}\int_{y}^{x}\kappa_{r}(x,\xi)(\xi-y)d\xi,\\
    g_{2}(x,y) &=\overline{K}\int_{y}^{x}\kappa_{i}(x,\xi)(\xi-y)d\xi,\\
    p_{1}(x) &= \overline{K}\left(1-\int_{0}^{x}\left[\kappa_{r}(x,y)-\kappa_{i}(x,y)\right]dy\right),\\
    p_{2}(x) &= \overline{K}\int_{0}^{x}\left[\kappa_{i}(x,y)+\kappa_{r}(x,y)\right]dy.
\end{align*}

The controls are obtained by setting $x=1$ in \eqref{eq:transformation_bernoulli1}-\eqref{eq:transformation_bernoulli2}:
\begin{align}
    \dot{U}_{1}(t) &= \int_{0}^{1}\biggl[\kappa_{r}(1,y) - f_{1}(1,y)\biggl] \beta_{t}(t,y)dy \nonumber\\
    & + \int_{0}^{1}\biggl[ \kappa_{i}(1,y) - \overline{K}(x-y) + f_{2}(1,y)\biggl]\beta_{xx}(t,y)dy\nonumber\\
    & - p_{1}(1)\vartheta_{s}(t), \label{eq:expression1_controlU1}\\
    U_{2}(t) &=  \int_{0}^{1}\biggl[\overline{K}(1-y)-\kappa_{i}(1,y) - g_{1}(1,y)\biggl]\beta_{t}(t,y)dy \nonumber \\
    & +\int_{0}^{1}\biggl[\kappa_{r}(1,y) - g_{2}(1,y)\biggl]\beta_{xx}(t,y)dy \nonumber\\
    & - p_{2}(1)\vartheta_{s}(t). \label{eq:expression1_controlU2}
\end{align}

Importantly, the control law \eqref{eq:expression1_controlU1} must be implemented as integral due to the boundary condition \eqref{eq:BCUerrordynamics}. Another observation we make is that even though the states $\vartheta_{s}$ and $\zeta$ converge exponentially to zero, the same cannot be said about $\beta$ and $\vartheta$. Indeed, when $v$ converges to zero, $\beta$ may converge to an arbitrary constant due to the equality $v(t,x)=\beta_{t}(t,x) - i\beta_{xx}(t,x)$.

\subsubsection{Achieving Regulation to zero}
In order to achieve regulation to zero, we are going to modify the control law \eqref{eq:expression1_controlU1}. Our objective is to express $\beta_{xx}$ in \eqref{eq:expression1_controlU1} through the time derivatives $\beta_{t}$ and $\beta_{tt}$.

Twice integrating the PDE \eqref{eq:PDEerrordynamics} with respect to $x$, first from $0$ to $x$, and them from $x$ to $1$, results in
\begin{align}
    \beta_{xx}(t,x) = \beta_{xx}(t,1) + \int_{x}^{1}\int_{0}^{y}\beta_{tt}(t,\xi)d\xi dy.\label{eq:bernoulli_twiceintegrated}
\end{align}

Substituting \eqref{eq:expression1_controlU2} into \eqref{eq:bernoulli_twiceintegrated}, we get
\begin{align}
    \beta_{xx}(t,x) &= \int_{0}^{1}\biggl[\overline{K}(1-y)-\kappa_{i}(1,y) - g_{1}(1,y)\biggl]\beta_{t}(t,y)dy\nonumber \\
     & +\int_{0}^{1}\biggl[ \kappa_{r}(1,y) - g_{2}(1,y)\biggl]\beta_{xx}(t,y)dy\nonumber\\
     & + \int_{x}^{1}\int_{0}^{y}\beta_{tt}(t,\xi)d\xi dy- p_{2}(1)\vartheta_{s}(t).\label{eq:expression1_betaxx}
\end{align}

In order to make progress, we will introduce the following notation
\begin{align}
    \mathcal{F}_{1}(x,y) &= \kappa_{r}(x,y)-f_{1}(x,y),\label{eq:operato1}\\
    \mathcal{F}_{2}(x,y) &= \kappa_{i}(x,y) - \overline{K}(x-y) + f_{2}(x,y),\label{eq:operato2}\\
    \mathcal{R}_{1}(x,y) &= \overline{K}(x-y)-\kappa_{i}(x,y) - g_{1}(x,y),\label{eq:operato3}\\
    \mathcal{R}_{2}(x,y) &= \kappa_{r}(x,y) - g_{2}(x,y).\label{eq:operato4}    
\end{align}

Then, multiplying \eqref{eq:expression1_betaxx} by $\mathcal{R}_{2}(1,y)$ and integrating from $0$ to $1$, yields
\begin{align}
    &\int_{0}^{1}\mathcal{R}_{2}(1,y)\beta_{xx}(t,y)dy =  \int_{0}^{1}\mathcal{R}_{2}(1,y)dy\nonumber\\
    &\hspace{0.1cm}\times\left[\int_{0}^{1}\mathcal{R}_{1}(1,y)\beta_{t}(t,y)dy + \int_{0}^{1}\mathcal{R}_{2}(1,y)\beta_{xx}(t,y)dy\right]\nonumber\\
    &\hspace{0.1cm}+ \int_{0}^{1}\mathcal{R}_{2}(1,y)\int_{y}^{1}\int_{0}^{z}\beta_{tt}(t,\xi)d\xi dzdy\nonumber\\
    &\hspace{0.1cm}-p_{2}(1)\vartheta_{s}(t) \int_{0}^{1}\mathcal{R}_{2}(1,y)dy .
\end{align}

Therefore,
\begin{align}
    &\int_{0}^{1}\mathcal{R}_{2}(1,y)\beta_{xx}(t,y)dy =  \frac{\varphi_{r}-1}{\varphi_{r}}\int_{0}^{1}\mathcal{R}_{1}(1,y)\beta_{t}(t,y)dy\nonumber\\
    & \hspace{1.4cm}-\frac{1}{\varphi_{r}}\int_{0}^{1}\biggl(\mathcal{Q}(1,y) - (1-\varphi_{r})(1-y)\biggl)\beta_{tt}(t,y)dy\nonumber\\
    &\hspace{1.4cm} - \frac{p_{2}(1)}{\varphi_{r}}\vartheta_{s}(t)\int_{0}^{1}\mathcal{R}_{2}(1,y)dy,    \label{eq:expressionBetaxx}
\end{align}

\noindent where
\begin{align*}
    \mathcal{Q}(x,y) &= \int_{y}^{x}\mathcal{R}_{2}(x,\xi)(\xi-y)d\xi,\\
    \varphi_{r} &= 1- \int_{0}^{1}\mathcal{R}_{2}(1,y)dy.
\end{align*}

Substituting \eqref{eq:expressionBetaxx} into \eqref{eq:expression1_betaxx}, considering \eqref{eq:operato1}-\eqref{eq:operato4}, one has
\begin{align}
    &\beta_{xx}(t,x) = -\frac{1}{\varphi_{r}}\int_{0}^{1}\mathcal{R}_{1}(1,y)\beta_{t}(t,y)dy\nonumber \\
    &-\frac{1}{\varphi_{r}}\int_{0}^{1}\left[\mathcal{Q}(1,y) - (1-\varphi_{r})(1-y)\right]\beta_{tt}(t,y)dy\nonumber\\
    &+ \int_{x}^{1}\int_{0}^{y}\beta_{tt}(t,\xi)d\xi - p_{2}(1)\frac{(1+\varphi_{r})}{\varphi_{r}}\vartheta_{s}(t)\int_{0}^{1}\mathcal{R}_{2}(1,y)dy.\label{eq:expression2_betaxx}
\end{align}

Then, substituting \eqref{eq:expression2_betaxx} into \eqref{eq:expression1_controlU1}:
\begin{align}
    \dot{U}_{1}(t) &= \int_{0}^{1} \biggl(\mathcal{F}_{1}(1,y) - \frac{\mathcal{R}_{1}(1,y)}{\varphi_{r}} \mathcal{F}_{2}(1,y)\biggl) \beta_{t}(t,y)dy\nonumber\\
    & - \int_{0}^{1}\left[\mathcal{S}(1,y) + \frac{\varphi(1)}{\varphi_{r}}(1-y-\mathcal{Q}(1,y))\right]\beta_{tt}(t,y)dy\nonumber\\
    & -\left[p_{1}(1) +p_{2}(1)\frac{\varphi(1)}{\varphi_{r}}\int_{0}^{1}\mathcal{R}_{2}(1,y)dy\right]\vartheta_{s}(t).\label{eq:integral_control}
\end{align}
\noindent where $\mathcal{S}(x,y) = \int_{y}^{x}\mathcal{R}_{2}(\xi-y)d\xi$ and $\varphi(x) = -\int_{0}^{x}\mathcal{R}_{2}(x,y)dy$. 
Integrating \eqref{eq:integral_control} with respect to time and using \eqref{eq:hessian_estimates}, we finally get the controller
\begin{align}
    U_{1}(t) &= \int_{0}^{1} \biggl(\mathcal{F}_{1}(1,y) + \frac{\mathcal{R}_{1} (1)}{\varphi_{r}} \mathcal{F}_{2}(1,y)\biggl)\beta(t,y)dy \nonumber\\
    & - \int_{0}^{1}\biggl[\mathcal{S}(1,y) + \frac{\varphi(1)}{\varphi_{r}}(1-y-\mathcal{Q}(1,y))\biggl]\beta_{t}(t,y)dy \nonumber\\
    & -\biggl[p_{1}(1) +p_{2}(1)\frac{\varphi(1)}{\varphi_{r}}\int_{0}^{1}\mathcal{R}_{2}(1,y)dy\biggl]\vartheta(t).
\end{align}

The controller $U_{2}$ can also be obtained in a similar manner:
\begin{align}
    U_{2}(t) &= \frac{c^{2}}{8}\beta(t,1)+\int_{0}^{1}\mathcal{F}_{1}(1,y)\beta (t,y)dy \nonumber\\
    &-\int_{0}^{1}\mathcal{F}_{2}(1,y)\beta_{t}(t,y)dy - p_{2}(1)\vartheta(t). \label{eq:control_law2}
\end{align}

\subsubsection{Implementable Extremum Seeking Control Law}
Introducing a result of \cite{Hale1990}, the averaged version of the gradient and Hessian estimate are calculated as
\begin{align}
    G_{av} (t) = H\vartheta_{av}(t), \quad \hat{H}_{av}(t) = H.\label{eq:hessian_estimates}
\end{align}

From \eqref{eq:control_schrodinger} and \eqref{eq:control_law_mean}, choosing $\overline{K}=KH$ with $K>0$, plugging the average gradient and Hessian estimates \eqref{eq:hessian_estimates}, and  introducing a low-pass filter with cut frequency $\overline{c}$ , we obtain the non-average controllers
\small{\begin{align}
    U_{1}(t) &= \frac{\overline{c}}{s+\overline{c}}\left\{  H\int_{0}^{1} \left[]\mathcal{F}_{1}(1,y) + \frac{\mathcal{R}_{1} (1,y)}{\varphi_{r}} \mathcal{F}_{2}(1,y)\right]\beta(t,y)dy \right.\nonumber\\
    &- H\int_{0}^{1}\biggl[\mathcal{S}(1,y) + \frac{\varphi(1)}{\varphi_{r}}(1-y-\mathcal{Q}(1,y))\biggl]\beta_{t}(t,y)dy\nonumber\\
    &\left. -\left[p_{1}(1) +p_{2}\frac{\varphi(1)}{\varphi_{r}}\int_{0}^{1}\mathcal{R}_{2}(1,y)dy\right]G(t)\right\},\label{eq:control_law_filtered}\\
    U_{2}(t)&=  \frac{\overline{c}}{s+\overline{c}}\left\{H\frac{c^{2}}{8}\beta(t,1) + \int_{0}^{1}\mathcal{F}_{1}(1,y)\beta (t,y)dy\right.\nonumber \\
    &\left.- H\int_{0}^{1}\mathcal{F}_{2}(1,y)\beta_{t}(t,y)dy - p_{2}(1)G(t)\right\}. \label{eq:control_law_filtered_2}
\end{align}}

\normalsize
\section{Stability Analysis}\label{section:stability}
In this section, the well-posedness and exponential stability of the proposed ES methodology is proved. First, define the state space
\begin{multline*}
    \hspace{-0.3cm} \mathcal{H}_{c} = \biggl\{ (X,\;f,\; g)\in \mathbb{R}\times H^{2}(0,1)\times L^{2}(0,1)\mid\,f'(0)=0,\\
    f(1) = \int_{0}^{1}\left(\mathcal{F}_{1}(1,y) + \frac{\varphi (1)}{\varphi_{r}} \mathcal{F}_{2}(1,y)\right)f(y)dy \\    
     - \int_{0}^{1}\left[\mathcal{S}(1,y) + \frac{\varphi(1)}{\varphi_{r}}(1-y-\mathcal{Q}(1,y))\right]g(y)dy \\
    \hspace{2cm} \left. -\left[p_{1}(1) +p_{2}(1)\frac{\varphi(1)}{\varphi_{r}}\int_{0}^{1}\mathcal{R}_{2}(1,y)dy\right]X(t) \right\},
\end{multline*}

\noindent with the following inner product induced norm of $\mathcal{H}_{c}$:
\begin{align*}
    \| (X,\;f,\;g) \|^{2}_{\mathcal{H}_{c}} = |X|^{2}+ \int_{0}^{1}\biggl(|f''(x)|^{2}+|g(x)|^{2}\biggl)dx.
\end{align*}

The closed-loop system can be written as
\begin{align}
    \frac{dY_{c}(t)}{dt} = \mathcal{A}Y_{c}(t),\label{eq:closedloop_operator}
\end{align}
\noindent where $Y_{{c}}=(X, \; f,\; g)$
\begin{align*}
   \hspace{-0.2cm} \mathcal{A}(X,f,g) &= \left(-\overline{K}X + g(0)+cf(0)-if''(0),g,-f^{(4)}\right),
    \end{align*}
\noindent $\forall (X,f,g)\in D(\mathcal{A})$ and
\begin{multline*}
    \hspace{-0.3cm} D(\mathcal{A}) = \bigg\{ (X,f,g)\in \mathcal{H}_{c}\mid \mathcal{A}\in \mathcal{H}_{c},\, f'''(0)=0, \\ 
      f''(1) = \frac{c^{2}}{8}f(1) + \int_{0}^{1}\biggl[\mathcal{F}_{1}(1,y)f(y) \\
      \hspace{3.8cm} -\mathcal{F}_{2}(1,y)g(y)\biggl]dy - p_{2}(1)X(t) \bigg\}.
\end{multline*}

The existence and boundedness of $\mathcal{A}^{-1}$, as well as the existence and uniqueness of a classical solution to \eqref{eq:closedloop_operator}, were established in \cite{Smyshlyaev2009}. With this in mind, we now present the main result of this paper.

\begin{thm}
    Consider the control system in Figure \ref{fig:euler}, with control laws $U_{1}$ and $U_{2}$ given in \eqref{eq:control_law_filtered}-\eqref{eq:control_law_filtered_2}, respectively. There exists $\overline{c}^{*}>0$ such that, $\forall \overline{c}\geq \overline{c}^{*}$, $\exists \omega^{*}(\overline{c})>0$ such that, $\forall \omega\geq \omega^{*}$, and $K>0$ sufficiently large, the closed-loop system \eqref{eq:ODE1errordynamics}-\eqref{eq:BCUerrordynamics} has a unique locally exponentially stable periodic solution in $t$ with a period $\Pi := 2\pi/\omega$, denoted as $\vartheta^{\Pi}(t)$, $\beta^{\Pi}(t,x)$. This solution satisfies the condition
    \begin{align}
        \|(\vartheta(t),\;\zeta_{xx}(t),\;\zeta_{t}(t))\|_{\mathcal{H}_{c}} \leq \mathcal{O}(1/\omega),\label{eq:convergance_closedloop}
    \end{align}
   
    Furthermore
    \begin{align}
        \lim_{t\rightarrow\infty}\sup |\Theta(t)-\Theta^{*}|&=\mathcal{O}(|a|+1/\omega), \label{eq:convergence_Theta}\\
        \lim_{t\rightarrow\infty}\sup |\theta_{1}(t)-\Theta^{*}|&=\mathcal{O}(r(\omega) +1/\omega), \label{eq:convergence_theta1}\\
        \lim_{t\rightarrow\infty}\sup|y(t)-y^{*}| &= \mathcal{O}(|a|^{2}+1/\omega^2), \label{eq:convergence_y}
    \end{align}
    \noindent where $r(\omega) = \frac{|a(\cosh(\sqrt{w})+\cos(\sqrt{w}))|}{2}$.    
\end{thm}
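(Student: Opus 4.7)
The plan is to prove the theorem in four main steps: (i) derive the average closed-loop system, (ii) establish exponential stability of the averaged dynamics using the backstepping target-system framework already built in the paper, (iii) invoke an infinite-dimensional averaging theorem to transfer this stability to the actual time-varying system and obtain a locally exponentially stable periodic orbit, and finally (iv) unwind the various error and backstepping transformations to convert the $\mathcal{O}(1/\omega)$ bound on the transformed state into the asserted bounds on $\Theta$, $\theta_1$, and $y$.

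First, I would substitute the filtered control laws \eqref{eq:control_law_filtered}--\eqref{eq:control_law_filtered_2} into the propagated error dynamics \eqref{eq:ODE1errordynamics}--\eqref{eq:BCUerrordynamics}. Using the averaging identities \eqref{eq:hessian_estimates} for the gradient and Hessian estimates, together with the choice $\overline{K}=KH$, I compute the closed-loop dynamics averaged over the dither period $\Pi=2\pi/\omega$. The averaged versions of $U_{1}$ and $U_{2}$ reduce (after the $\overline{c}\to\infty$ limit in the low-pass filter) to the deterministic backstepping controllers \eqref{eq:expression1_controlU1}--\eqref{eq:expression1_controlU2} that place the system into the target form \eqref{eq:target_bernoulli_pde}--\eqref{eq:ISS}. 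Hence in the coordinates $(\vartheta_s,\zeta_{xx},\zeta_t)$ the averaged closed loop is exactly the cascade covered by Lemma \ref{lem:properties_operator_target_bernoulli}, which guarantees an exponentially stable $C_0$-semigroup with decay rate $c$, and therefore exponential decay on $\mathcal{H}_c$ via the invertibility of the transformations \eqref{eq:backsteppingtransformation1}--\eqref{eq:backsteppingtransformation2} and \eqref{eq:transformation_bernoulli}.

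The next step, which I expect to be the main obstacle, is transferring exponential stability of the average system to the actual $\omega$-periodic, infinite-dimensional dynamics. Because the state lives in $\mathcal{H}_c$, finite-dimensional averaging does not apply; instead I would invoke the averaging theorem for $C_0$-semigroups from \cite{Hale1990}, exploiting the Riesz-basis property of $\mathcal{A}_{\zeta_1}$ asserted in Lemma \ref{lem:properties_operator_target_bernoulli} to obtain a uniform spectral gap, and the smoothness in $t$ of the dither-driven perturbation to obtain the required Lipschitz bound. The role of $\overline{c}^*$ is to ensure that the filter strips off high-frequency content so that the two-time-scale separation needed by the averaging theorem holds; the role of $\omega^*(\overline{c})$ is to push the probing frequency above the relevant modes of $\mathcal{A}_{\zeta_1}+\mathcal{A}_{\zeta_2}$ so that no resonance destroys the averaged decay. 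The theorem then yields existence and uniqueness of a locally exponentially stable $\Pi$-periodic solution $(\vartheta^\Pi,\beta^\Pi)$ with deviation $\mathcal{O}(1/\omega)$ from the origin, which translates to the estimate \eqref{eq:convergance_closedloop} after passing through the invertible backstepping maps.

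Finally, to obtain the explicit bounds \eqref{eq:convergence_Theta}--\eqref{eq:convergence_y}, I unwind the definitions. From \eqref{eq:error_theta1} and \eqref{eq:error_vartheta}, $\Theta(t)-\Theta^{*}=\vartheta(t)+a\sin(\omega t)$, so taking $\limsup$ and combining the $\mathcal{O}(1/\omega)$ bound on $\vartheta$ with $|a\sin(\omega t)|\leq |a|$ yields \eqref{eq:convergence_Theta}. Similarly, $\theta_{1}(t)-\Theta^{*}=\tilde{\theta}_{1}(t)+S_{1}(t)$, and using \eqref{eq:perturbationS1} with the decay of $\tilde{\theta}_1$ (established through $\vartheta_s$ and the backstepping inverse) produces \eqref{eq:convergence_theta1}. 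The output bound \eqref{eq:convergence_y} is then immediate by substituting \eqref{eq:convergence_Theta} into the quadratic map \eqref{extrachapter.eq:final_output_static_map}. The delicate part throughout is the infinite-dimensional averaging step, since it requires verifying uniform-in-$\omega$ bounds on the Lipschitz constants of the nonlinear, kernel-weighted integral operators appearing in \eqref{eq:control_law_filtered}--\eqref{eq:control_law_filtered_2}, so that the perturbation argument closes on $\mathcal{H}_c$ rather than only on smoother subspaces.
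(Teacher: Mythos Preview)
Your proposal is correct and follows essentially the same approach as the paper: derive the averaged closed loop, use Lemma~\ref{lem:properties_operator_target_bernoulli} together with the invertible backstepping transformations to obtain exponential stability of the average system on $\mathcal{H}_c$, then invoke the infinite-dimensional averaging theorem of \cite{Hale1990} to produce the $\Pi$-periodic solution and the $\mathcal{O}(1/\omega)$ bound, and finally unwind \eqref{eq:error_sigma}--\eqref{eq:error_vartheta} and \eqref{extrachapter.eq:final_output_static_map} to get \eqref{eq:convergence_Theta}--\eqref{eq:convergence_y}. The paper's proof is somewhat terser---it starts by reading off the eigenvalues of $\mathcal{A}$ and appeals directly to the well-posedness established in \cite{Smyshlyaev2009} rather than spelling out the role of $\overline{c}^*$ or the Lipschitz verification you flag as the delicate point---but the logical skeleton and the key ingredients are the same.
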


\begin{proof}
First, note that the eigenvalues of $\mathcal{A}$ are
\begin{align*}
    \sigma_{0} = -\overline{K}, \qquad \sigma_{n} = -c \pm i \frac{\pi^{2}(2n+1)^{2}}{4},
\end{align*}
\noindent where $n\in \mathbb{R}$.

From this, and the existence and boundedness of $\mathcal{A}^{-1}$, and Theorem 1.3 of \cite{pazy2012}, it follows that $\mathcal{A}$ generates a C$_{0}$-semigroup on $\mathcal{H}_{c}$. Therefore, for any initial value $(\vartheta(0),\,\zeta(0),\zeta_{t}(0))\in \mathcal{H}_{c}$, there exists a unique solution to \eqref{eq:closedloop_operator}.

By the density of $D(\mathcal{A})$ in $\mathcal{H}_{c}$, and the inverse backstepping transformations \eqref{eq:backsteppingtransformation1} and \eqref{eq:backsteppingtransformation2}, and Lemma \ref{lem:properties_operator_target_bernoulli}, it follows that for any $\varepsilon>0$, there exists $M_{\varepsilon}>0$, such that for all initial conditions $(\vartheta (0),\;\zeta(0),\;\zeta_{t}(0))\in \mathcal{H}_{c}$,
\begin{multline*}
    \left\|(\vartheta(t),\;\zeta_{xx}(t),\;\zeta_{t}(t))\right\|_{\mathcal{H}_{c}} \leq \\M_{\varepsilon}\mathrm{e}^{(-c+\varepsilon)t}\left\|(\vartheta(0),\;\zeta_{xx}(0),\;\zeta_{t}(0))\right\|_{\mathcal{H}_{c}}.
\end{multline*}

Then, according to the averaging theory in infinite dimensions \cite{Hale1990}, for $\omega$ sufficiently large, the closed-loop system \eqref{eq:ODE1errordynamics}-\eqref{eq:BCUerrordynamics}, with $U_{1}$ and $U_{2}$ defined in \eqref{eq:control_law_filtered} and \eqref{eq:control_law_filtered_2}, respectively, has a unique exponentially stable periodic solution around its equilibrium satisfying \eqref{eq:convergance_closedloop}.

The asymptotic convergence to a neighborhood of the extremum point is proved taking the absolute value of the second expression in \eqref{eq:error_sigma} after replacing $\hat{\Theta} = \vartheta + \Theta^{*}$ from \eqref{eq:error_vartheta}, resulting in $        |\Theta(t)-\Theta^{*}| = |\vartheta(t) + a\sin (\omega t)|$. From this, and writing it by adding and subtracting the periodic solution $\vartheta^{\Pi}$, it follows that
    \begin{align}
        |\Theta(t)-\Theta^{*}| = |\vartheta(t) -\vartheta^{\Pi}(t) + \vartheta^{\Pi}(t)+ a\sin (\omega t)|.\label{eq:abs_Thetaerror}
    \end{align}

By applying the average theorem, one can conclude that $\vartheta(t) -\vartheta^{\Pi}(t)\rightarrow 0$ as $t\rightarrow\infty$. Consequently,
\begin{align}
        \lim_{t\rightarrow\infty}\sup |\Theta(t)-\Theta^{*}| = \lim_{t\rightarrow\infty}\sup | \vartheta^{\Pi}(t)+ a\sin (\omega t)|.
\end{align}

Finally, using the relationship \eqref{eq:convergance_closedloop}, we get the result presented in \eqref{eq:convergence_Theta}.

Since $\theta_{1}(t)-\Theta^{*}=\tilde{\theta}_{1}+S_{1}(t)$ from \eqref{eq:error_sigma}-\eqref{eq:error_vartheta}, and recalling that $S_{1}$ is of order $\mathcal{O}\left( \frac{|a(\cosh(\sqrt{w})+\cos(\sqrt{w}))|}{2}\right)$, we get the ultimate bound in \eqref{eq:convergence_theta1}.

In order to show the convergence of the output $y$, we can follow the same steps employed for $\Theta$ by plugging \eqref{eq:abs_Thetaerror} into \eqref{extrachapter.eq:final_output_static_map}, such that
\begin{align*}
        \lim_{t\rightarrow\infty}\sup|y(t)-y^{*}| = \lim_{t\rightarrow\infty}\sup |H\vartheta^{2}(t)+Ha^{2}\sin(\omega t)^{2}|.
\end{align*}

Hence, by rewriting the above equation in terms of $\vartheta^{\Pi}$ and again with the help of \eqref{eq:convergance_closedloop}, we finally get \eqref{eq:convergence_y}. The proof is complete.
\end{proof}

\section{Simulation results}\label{section:simulation}
The numerical implementation of the Euler-Bernoulli equation was carried out using the finite element method with cubic Hermitian functions. Numerical simulations illustrate the stability and convergence properties of the proposed ES scheme, where the actuation dynamics are governed by the one-dimensional Euler-Bernoulli PDE.

Considering a quadratic static map as in \eqref{extrachapter.eq:static_map}, the system is subjected to the control laws \eqref{eq:control_law_filtered} and \eqref{eq:control_law_filtered_2}. The Hessian is given by $H=-1$, with an optimizer $\Theta^{*}=1.5$ and an optimal unknown output value $y^{*}=2.4$. The controller parameters are chosen as $\omega = 5$, $a = 0.2$, $c = 0.1$, $\overline{c}=6$, and $K = 0.1$.

The closed-loop simulation results, presented in Figure \ref{fig:results} and \ref{fig:results3D}, demonstrate the effectiveness of the proposed control approach. The control actions depicted in Figure \ref{fig:results} ensure that the variables $(y,\theta,\Theta)$ converge toward the neighborhood of their optimal values $(y^{*},\Theta^{*},\Theta^{*})$. These findings validate the performance of the ES-based control strategy in driving the system towards optimal operation as shown in Figure \ref{fig:results3D}.

\begin{figure}[htpb]
    \centering
    \includegraphics[width=\linewidth]{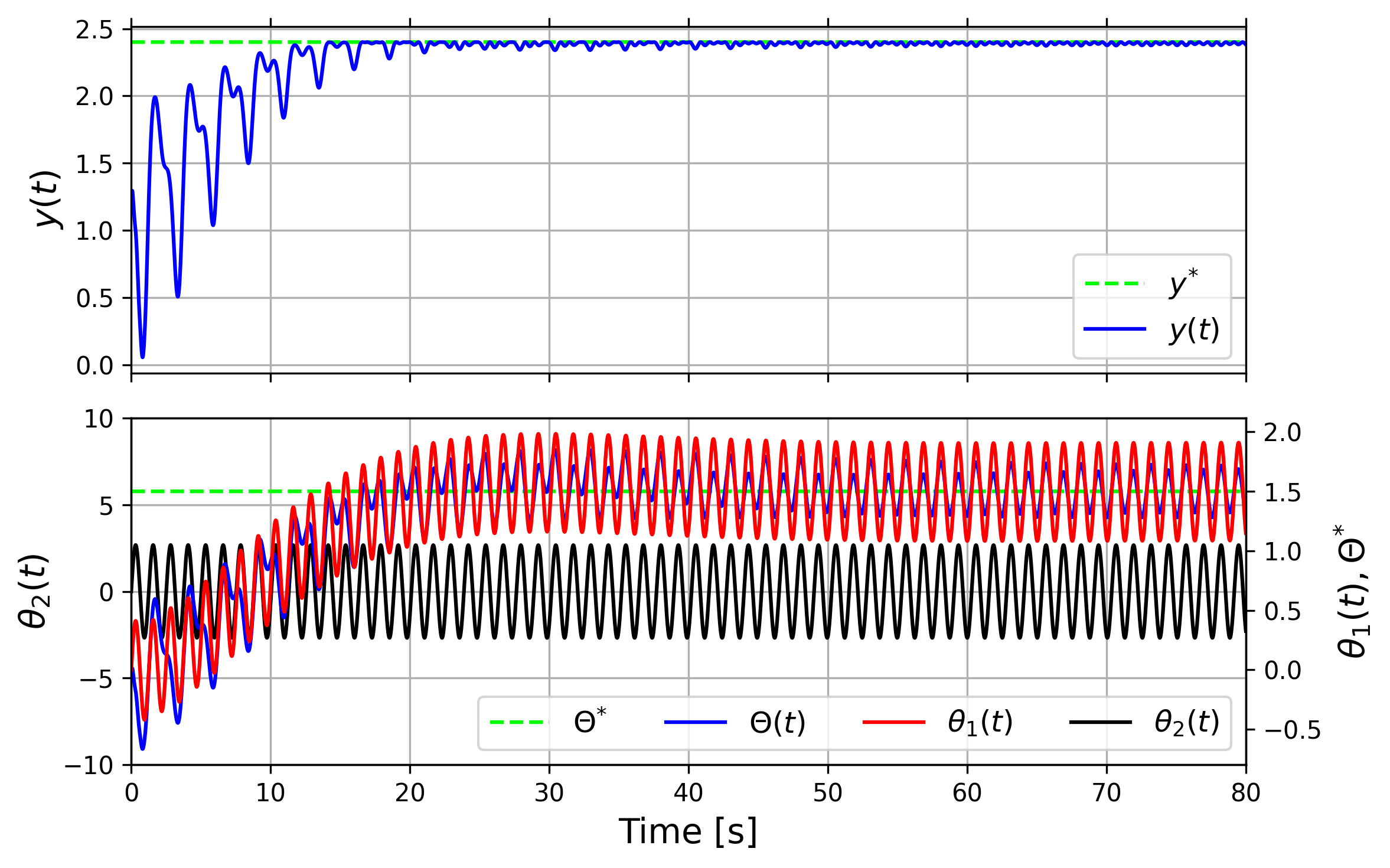}
    \caption{The closed-loop response of the EB beam PDE with ES compensating controller.}
    \label{fig:results}
\end{figure}

\begin{figure}[htpb]
    \centering
    \includegraphics[width=\linewidth]{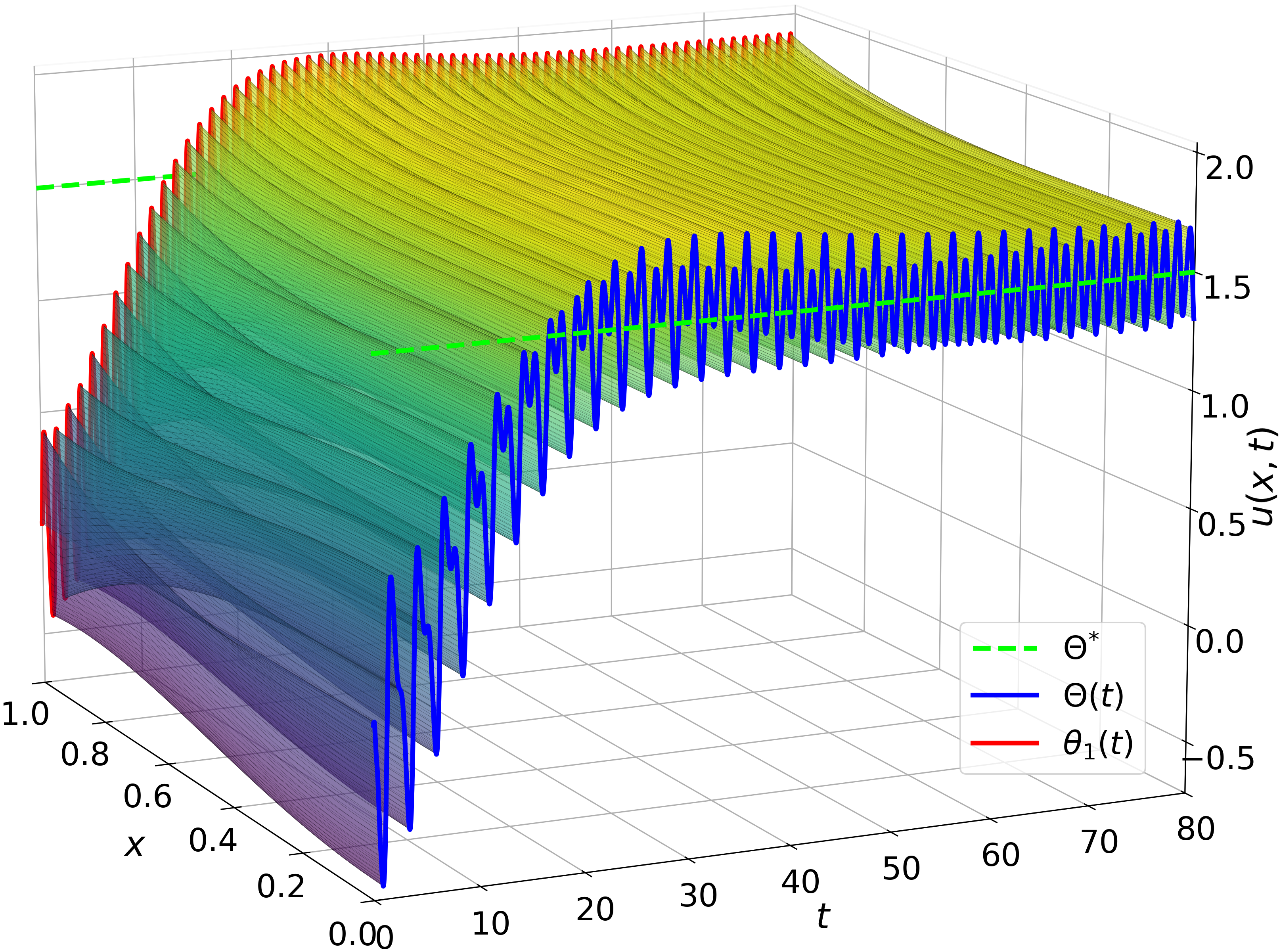}
    \caption{Time evolution of the beam displacement.}
    \label{fig:results3D}
\end{figure}

\section{Conclusions} \label{section:conclusion}
The proposed ES methodology optimizes the quadratic static map by seeking the optimal $\Theta^{*}$ in cascade with EB beam PDEs. While the infinite-dimension actuation dynamics must be known, no prior information about the map parameters is assumed. To compensate for the dynamics, a boundary control law with average-based estimates of the gradient and Hessian of the unknown map is proposed for the EB beam PDE using the backstepping methodology and its corresponding representation using the Schr\"{o}dinger equation. For future work, the approach can be extended to different boundary conditions. However, at this time, it remains uncertain whether a direct connection can be established between the EB equation and the Schr\"{o}dinger equation under distinct boundary conditions. Other possibilities lie in the design and analysis of different control problems with EB beam PDEs, as considered in the following references \cite{paper1,paper2,paper3,paper4,paper5,paper6,paper7,paper8,paper9,paper10,paper11,paper12,paper13,paper14,paper15,paper16,paper17,paper18,paper19,paper20}.

\bibliographystyle{IEEEtran}
\bibliography{references}

\end{document}